\title{A coarse geometric approach \\ to graph layout problems}
\author[1]{Wanying Huang}
\author[2]{David Hume\thanks{Email address for correspondence: d.hume@bham.ac.uk}}
\author[3]{Samuel~J.~Kelly}
\author[3]{Ryan Lam}
\affil[1]{McGill University, Montreal, Canada}
\affil[2]{University of Birmingham, UK }
\affil[3]{University of Bristol, UK}
\date{\today}                     
\numberwithin{equation}{section}
\newtheorem{theorem}[equation]{Theorem}
\newtheorem{proposition}[equation]{Proposition}
\newtheorem{corollary}[equation]{Corollary}
\newtheorem{lemma}[equation]{Lemma}
\theoremstyle{definition}
\newtheorem{question}[equation]{Question}
\newtheorem{definition}[equation]{Definition}
\newtheorem{remark}[equation]{Remark}
\newtheorem*{corollary*}{Corollary}
\newtheorem*{theorem*}{Theorem}
\newtheorem*{proposition*}{Proposition}
\newtheoremstyle{citing}% name
  {3pt}%      Space above, empty = `usual value'
  {3pt}%      Space below
  {\itshape}% Body font
  {}%         Indent amount (empty = no indent, \parindent = para indent)
  {\bfseries}% Thm head font
  {}%        Punctuation after thm head
  {.5em}%     Space after thm head: " " = normal interword space;
\theoremstyle{citing}
\DeclareMathOperator{\bw}{bw}
\DeclareMathOperator{\cw}{cw}
\DeclareMathOperator{\mla}{mla}
\DeclareMathOperator{\pw}{pw}
\DeclareMathOperator{\sn}{sn}
\DeclareMathOperator{\tbw}{tbw}
\DeclareMathOperator{\tw}{tw}
\DeclareMathOperator{\vs}{vs}
\DeclareMathOperator{\sumcut}{sc}
\DeclareMathOperator{\wid}{wid}
\DeclareMathOperator{\diam}{diam}
\DeclareMathOperator{\cut}{cut}
\DeclareMathOperator{\sep}{sep}
\DeclareMathOperator{\im}{im}
\DeclareMathOperator{\vol}{vol}
\DeclareMathOperator{\wir}{wir}
\DeclareMathOperator{\para}{par}
\newcommand{\set}[1]{\left\{#1\right\}}
\newcommand{\setcon}[2]{\left\{#1\ \left|\ #2\right.\right\}}
\newcommand{\abs}[1]{\left\lvert#1\right\rvert}
\newcommand{\R}{\mathbb{R}}
\newcommand{\N}{\mathbb{N}}
\newcommand{\Z}{\mathbb{Z}}
\newcommand{\HH}{\mathbb{H}}
\def\XXint#1#2#3{{\setbox0=\hbox{$#1{#2#3}{\int}$}
\vcenter{\hbox{$#2#3$}}\kern-.5\wd0}}
\numberwithin{equation}{section}
\begin{document}

\maketitle

\begin{abstract}
We define a range of new coarse geometric invariants based on various graph-theoretic measures of complexity for finite graphs, including: treewidth, pathwidth, cutwidth and bandwidth. We prove that, for bounded degree graphs, these invariants can be used to define functions which satisfy a strong monotonicity property, namely they are monotonically non-decreasing with respect to a large-scale geometric generalisation of graph inclusion, and as such have potential applications in coarse geometry and geometric group theory. On the graph-theoretic side, we prove asymptotically optimal bounds on most of the above widths for the family of all finite subgraphs of any bounded degree graph whose separation profile is known to be of the form $r^a\log(r)^b$ for some $a>0$. This large class includes Diestel-Leader graphs, all Cayley graphs of non-virtually cyclic polycyclic groups, uniform lattices in almost all connected unimodular Lie groups, and many hyperbolic groups.
\end{abstract}

\section{Introduction}
A core aspect of coarse graph theory is to identify geometric properties of (typically connected, infinite) graphs which are preserved by a large-scale geometric generalisation of graph isomorphism known as quasi-isometry, and use these properties to distinguish classes of graphs. A particular focus has been on finding ``large scale'' analogues of famous graph-theoretic results, such as Menger's theorem, K\"onig's lemma, Halin's grid theorem, etc.\ and studying the relationship between variants of Gromov’s asymptotic dimension and graph-theoretic notions such as graph minors (cf.\ \cite{GeorPapo} and references therein).

In this paper, we are interested in a large-scale geometric generalisation of graph inclusion, called a \textbf{regular embedding}. To consider inclusion of graphs from a large-scale perspective, it is necessary to introduce some additional restrictions, as every graph can be included into a complete graph, and all large-scale geometric information is lost. To deal with this, we will restrict our attention to bounded degree graphs. In this context, the definition of a regular embedding is very simple: given two bounded degree graphs $X$ and $Y$ and a constant $\kappa\geq 1$, a regular map $\psi:VX\to VY$ is \textbf{$\kappa$-regular} if
\begin{itemize}
    \item $d_Y(\psi(v),\psi(v'))\leq \kappa d_X(v,v')$, for all $v,v'\in VX$; and
    \item $|\psi^{-1}(w)|\leq \kappa$, for every $w\in VY$.
\end{itemize}
We say a map is a regular embedding if it is $\kappa$-regular for some $\kappa$. Regular embeddings are a common geometric generalisation of graph inclusion among graphs of bounded degree, and subgroup inclusion among finitely generated groups (with respect to word metrics). We will use $d_X$ to denote the (extended) metric on $X$ where $d_X(x,y)$ is the length of the shortest path from $x$ to $y$ in $X$ (or $+\infty$ if no such path exists). Quasi-isometries, and to a significantly lesser extent regular embeddings, have been intensively studied in the field of (Cayley graphs of) finitely generated groups, as they are natural geometric generalisations of commensurability and subgroup inclusion, respectively. Our interest is in finding geometric properties of graphs and metric spaces that behave sufficiently well that we may use them to prove the non-existence of regular embeddings. The two classical examples are the growth function and asymptotic dimension which obstruct regular embeddings from the $3$-regular tree to a $2$-dimensional integer grid and vice-versa respectively. Over the last ten years, the number and variety of these invariants has increased dramatically, linking regular embeddings to expander graphs, fractal geometry, Lie theory, isoperimetry, Lorentz geometry, electrical capacitance, and more \cite{BarrettHume,Frances-coarse, GladShum, LeCozGournay, HumeMackTess-Pprof, HumeMackTess-PprofLie, HumeMackTess-genasdim, HumSepExp}.
\medskip

For this paper, the most important of these new invariants is the \textbf{separation profile} of Benjamini-Schramm-Tim\'ar \cite{BenSchTim-12-separation-graphs}. We recall that, for a finite graph $\Gamma=(V\Gamma,E\Gamma)$ and a constant $\varepsilon\in(0,1)$, the \textbf{$\varepsilon$-cutsize} of $\Gamma$ is the minimal cardinality of a subset $S\subseteq V\Gamma$ such that every connected component of $\Gamma-S$ has at most $\varepsilon |V\Gamma|$ vertices.

The $\varepsilon$-separation profile of a (typically infinite) graph $X$ is the function $\sep^\varepsilon_X:\N\to\N$ given by
\[    \sep^\varepsilon_X(r)=\max\setcon{\cut^\varepsilon(\Gamma)}{\Gamma\leq X,\ |V\Gamma|\leq r}
\]
Here we use the notation $\Gamma\leq X$ to signify that $\Gamma$ is a subgraph of $X$. We consider separation profiles (and indeed many other functions in this paper) using a natural partial order: given two functions $f,g:A\to B$ where $A,B\subseteq [0,+\infty)$, we write $f\lesssim g$ if there is a constant $C\geq 1$ such that for every $a\in A$
\[
 f(a) \leq Cg(Ca)+C
\]
We write $f\simeq g$ if $f\lesssim g$ and $g\lesssim f$. This partial order is common in coarse geometry, one classical example is the growth function of a finitely generated group: the exact value of the function depends on the choice of finite generating set, but they are all $\simeq$-equivalent, allowing one to define the growth function of the group as the $\simeq$ equivalence class of the growth function of any one of its Cayley graphs. Where appropriate, we will write $\lesssim_{c,d,\ldots}$ and $\simeq_{c,d,\ldots}$ to indicate that $C$ depends only on the parameters $c,d,\ldots$.

The separation profile enjoys two important properties:
\begin{proposition}\textup{\cite[pp.642]{BenSchTim-12-separation-graphs}}
    For any graph $X$ and any $\varepsilon,\varepsilon'\in(0,1)$,
    \[
        \sep^\varepsilon_X\simeq_{\varepsilon,\varepsilon'} \sep^{\varepsilon'}_X 
    \]
\end{proposition}
When no constant $\varepsilon$ is specified it is assumed to be $\frac12$.

\begin{proposition}\label{monotone}\textup{\cite[Lemma 1.3]{BenSchTim-12-separation-graphs}}
    If $X,Y$ are graphs with finite maximum degrees $\Delta_X,\Delta_Y$ respectively and there is a regular map $\phi:VX\to VY$, then
\[
    \sep_X \lesssim_{\Delta_X,\Delta_Y} \sep_Y
\]
\end{proposition}

Our main result is that Proposition \ref{monotone} holds when we replace the cutsize by many other natural measures of complexity associated to finite graphs. 

\subsection{Coarse monotonicity of graph layout problems}
The primary objects of study in this paper are \textbf{graph layout problems}: constructing linear orderings or decompositions of graphs in order to minimise a specified cost function. Examples of such invariants include treewidth, cutwidth and bandwidth.  There are many motivations for finding upper bounds on these invariants coming from a variety of applications including: optimization of networks for parallel computer architectures, VLSI circuit design, information retrieval, numerical analysis, computational biology, graph theory, scheduling and archaeology (cf.\ \cite{DPS-survey}, \cite{Bodlaender_treewidth} and references therein). Typically, realising the minimal possible value of these invariants is NP-hard (see, for example \cite{GJ79} for cutwidth and \cite{ACP-twNP} for treewidth). Despite this, there are asymptotically optimal upper bounds for large classes of graphs. One noteworthy result in the context of this paper is that for every $m$-vertex graph $H$, any $n$-vertex graph which does not contain $H$ as a minor has treewidth at most $m^{3/2}n^{1/2}$ \cite[(1.8)]{Alon-Seymour-Thomas}. 

We will split the graph layout problems we are interested in into two families and (initially) consider them separately.

\subsubsection{Ordering problems: cutwidth, bandwidth...}
A linear ordering of a finite graph $\Gamma$ is simply an enumeration of its vertices, i.e.\ a bijection $f:V\Gamma\to \{1,\ldots,|V\Gamma|\}$. There are various criteria which could qualify an enumeration as ``good''; we could seek an enumeration which minimises one of the following two functions
\[
\begin{array}{lll}
    \cw_f:\set{1,\ldots,|V\Gamma|}\to \N & \textup{given by} & \cw_f(i)=\abs{\setcon{xy\in E\Gamma}{f(x)<i\leq f(y)}} \\
   \bw_f:E\Gamma\to\N & \textup{given by} & \bw_f(xy)=|f(x)-f(y)|
\end{array}
\]
Specifically, the cutwidth ($\cw$), bandwidth ($\bw$), sumcut ($\sumcut$) and minimal linear arrangement ($\mla$) of a finite graph $\Gamma$ are defined respectively as 
\begin{align*}
    \cw(\Gamma) & = \min_f\left(\max_i \cw_f(i)\right) =\min_f \|\cw_f\|_{\infty} \\
    \bw(\Gamma) & = \min_f\left(\max_{xy\in E\Gamma} \bw_f(xy)\right) =\min_f \|\bw_f\|_{\infty} \\
    \sumcut(\Gamma) & = \min_f\left(\sum_i \cw_f(i)\right) =\min_f \|\cw_f\|_{1} = \\
    \mla(\Gamma) & = \min_f\left(\sum_{xy\in E\Gamma} \bw_f(xy)\right) =\min_f \|\bw_f\|_{1} 
\end{align*}
Extending this, for each $p\in [1,+\infty]$ we define the $p$-cutwidth and $p$-bandwidth respectively of a finite graph to be
\begin{align*}
    \cw^p(\Gamma) & =\min_f \|\cw_f\|_{p} \\
    \bw^p(\Gamma) & =\min_f \|\bw_f\|_{p}
\end{align*}
Then, for a (typically infinite) graph $X$, we define the $p$-cutwidth and $p$-bandwidth profiles of $X$ to be
\begin{align*}
    \cw^p_X(r) & = \max\setcon{\cw^p(\Gamma)}{\Gamma\leq X,\ |V\Gamma|\leq r} \\
    \bw^p_X(r) & = \max\setcon{\bw^p(\Gamma)}{\Gamma\leq X,\ |V\Gamma|\leq r}
\end{align*}

\begin{remark}
    We will typically use $\cw_X$ and $\bw_X$ in place of $\cw^\infty_X$ and $\bw^\infty_X$.
\end{remark}

All of these profiles enjoy the same monotonicity under regular maps as the separation profile.

\begin{theorem}\label{thm:monotoneLA} Let $X,Y$ be bounded degree graphs with maximum degrees $\Delta_X,\Delta_Y$ respectively. If there is a $\kappa$-regular map $\phi:VX\to VY$, then 
\[
 \cw_X\lesssim_{\kappa,\Delta_X,\Delta_Y} \cw_Y \quad \textup{and} \quad \bw_X\lesssim_{\kappa,\Delta_X,\Delta_Y} \bw_Y
\]
If, in addition, $EY$ is infinite, then for every $p\in [1,+\infty)$ we have
\[
 \cw_X^p\lesssim_{\kappa,\Delta_X,\Delta_Y,p} \cw_Y^p \quad \textup{and} \quad \bw_X^p\lesssim_{\kappa,\Delta_X,\Delta_Y,p} \bw_Y^p
\]    
\end{theorem}
As in the case of the separation profile, the finite maximum degree assumption is essential: any injective map from the countably infinite complete graph to a countable star is regular. The one exception is for usual bandwidth, but this is not interesting in our context as $\bw_X(r)\simeq r$ whenever $X$ does not have bounded degree. To see this, note that the upper bound $\bw(\Gamma)\leq |V\Gamma|-1$ holds for any finite graph, while the star graph on $r$ vertices has bandwidth $\lceil\frac12(r-1)\rceil$.

\subsubsection{Decomposition problems: treewidth, pathwidth,...}
The next class of invariants we consider are based on graph decompositions:

\begin{definition}
    Let $G$ be a (possibly infinite) graph and let $\Gamma$ be a finite graph. A $G$-decomposition of $\Gamma$ is a family $\mathcal X=\{X_g\}_{g\in VG}$ of subsets of $V\Gamma$ satisfying the three conditions
    \begin{enumerate}
        \item $\bigcup_{g\in G} X_g=V\Gamma$,
        \item for every edge $xy\in E\Gamma$, there is some $g\in VG$ such that $x,y\in X_g$,
        \item for each $v\in V\Gamma$, the full subgraph of $G$ with vertex set $\setcon{g\in VG}{v\in X_g}$ is connected.
    \end{enumerate}
    We define $\wid_{\mathcal X}:VG\to\N$ by $\wid_{\mathcal X}(g)=|X_g|$.
\end{definition}
We define the \textbf{$(G,p)$-width of $\Gamma$} to be
\[
\wid^{G,p}(\Gamma) = \min_{\mathcal X} \|\wid_{\mathcal X}\|_p  = \min_{\mathcal X} \left(\sum_{g\in VG} |X_g|^p\right)^\frac1p
\]
where the minimum is taken over all $G$-decompositions of $\Gamma$. Note that as $\Gamma$ is finite and $G$ is non-empty, there is always a trivial $G$-decomposition $X_g=V\Gamma$ for some $g\in VG$ and $X_h=\emptyset$ for all $h\in VG\setminus\{g\}$, so for every $G,\Gamma,p$, we have $\wid_G^p(\Gamma)\leq |V\Gamma|$.

When we replace $G$ by the regular tree of countably infinite valence $T$ or a biinfinite countable path $P$, we recover the treewidth $\tw(\Gamma)=\wid^{T,\infty}(\Gamma)-1$ and pathwidth $\pw(\Gamma)=\wid^{P,\infty}(\Gamma)-1$ respectively.

The \textbf{$(G,p)$-width profile} of a (typically infinite) graph $X$ is given by
\[
 \wid^{G,p}_X(r)=\max\setcon{\wid^{G,p}(\Gamma)}{\Gamma\leq X,\ |V\Gamma|\leq r}
\]
Again, all of these profiles enjoy the same monotonicity under regular maps enjoyed by the separation profile.

\begin{theorem}\label{thm:monotonedecomp} Let $X,Y$ be bounded degree graphs with maximum degrees $\Delta_X,\Delta_Y$ respectively. If there is a regular map $\phi:VX\to VY$, then for every graph $G$, and every $p\in[1,\infty]$ we have
\[
 \wid^{G,p}_X\lesssim_{\Delta_X,\Delta_Y,G,p} \wid^{G,p}_Y
\]
\end{theorem}

Graph width profiles also behave well with respect to regular maps between the decomposing graphs when they have bounded degree.

\begin{proposition}\label{prop:widthGinv}
    Let $G,G'$ be bounded degree graphs. If there is a $\kappa$-regular map $\phi:VG\to VG'$, then for every $p\in[1,+\infty]$ and every bounded degree graph $X$
    \[
        \wid^{G',p}_X(r) \lesssim_{p,\kappa,\Delta_G,\Delta_{G'}} \wid^{G,p}_X(r)
    \]
\end{proposition}

We define the treewidth and pathwidth profiles of a (typically infinite) graph $X$ to be
\begin{align*}
    \tw_X(r) &= \max\setcon{\tw(\Gamma)}{\Gamma\leq X,\ |V\Gamma|\leq r} \\
    \pw_X(r) &= \max\setcon{\pw(\Gamma)}{\Gamma\leq X,\ |V\Gamma|\leq r}
\end{align*}
It is clear that $\tw_X(r)\simeq \wid^{T,\infty}_X(r)$ and $\pw_X(r)\simeq \wid^{P,\infty}_X(r)$ for every graph $X$, and therefore $\tw_X$ and $\pw_X$ also behave monotonically with respect to regular maps.

\subsection{Comparing invariants}
Using existing results from the graph theory literature we deduce the following:

\begin{proposition}\label{prop:compare}
    Let $X$ be a bounded degree graph. Then
    \[
    \sep_X \lesssim \tw_X \lesssim \pw_X\simeq \cw_X \lesssim \bw_X
    \]
\end{proposition}
When $X$ is the infinite $3$-regular tree, $\sep_X\simeq \tw_X\simeq 1$, $\pw_X(r)\simeq \cw_X(r)\simeq \ln(1+r)$ and $\bw_X(r)\simeq r/\ln(1+r)$. We are not aware of a bounded degree graph $X$ satisfying $\sep_X\not\simeq \tw_X$ (see question \ref{qu:septw}).

\begin{remark}
    As a byproduct of the proof of Proposition \ref{prop:compare}, we actually show (Remark \ref{rem:otherinvs}) that analogously defined profiles using topological bandwidth, search number and vertex separation in place of cutwidth are $\simeq$-equivalent to $\cw_X$. These graph invariants will not be used in the paper, so we will not recall the definitions.
\end{remark}
Of the profiles introduced so far in this paper, the best understood (at least in the context of Cayley graphs) is the separation profile. Taking this as our motivation, we prove upper and lower bounds on $p$-cutwidth profiles in terms of separation that differ by at most a logarithmic function.

\begin{theorem}\label{sepboundscw}
 Let $X$ be a (typically infinite) graph with maximum degree $\Delta_X$. Then, for every $p\in[1,+\infty]$
    \[
     r^{\frac1p}\sep_X(r) \lesssim \cw^p_X(r) \lesssim r^{\frac1p}\cw_X(r) \lesssim r^{\frac1p}\sep_X(r) (1+\log_2(r))
    \]
\end{theorem}
Here we adopt the convention that $\frac{1}{+\infty} = 0$. While this means that, for the moment, cutwidth profiles obstruct the same regular maps as separation profiles, it is excellent for computing $p$-cutwidth profiles.

\subsection{Calculating $p$-cutwidth}

We start by briefly recapping some highlights from the theory of separation profiles.

\begin{itemize}
    \item Let $X$ be a vertex transitive graph\footnote{if $X$ is vertex transitive and $\gamma_X(r)\lesssim r^{d'}$ for some $d'>0$, then $\gamma_X(r)\simeq r^{d}$ for some $d\in\N$ \cite{Trofimov,Guivarch}} with $\gamma_X(r)\simeq r^d$, then
    \[
     \sep_X(r)\simeq r^{1-\frac1d}
    \]
    (see \cite{HumeMackTess-Pprof,LeCozGournay})
    \item Let $X$ be a Cayley graph of a uniform lattice in a connected semisimple Lie group $G\neq SL(2,\R)$ of real rank $r$. Then
    \[
    \sep_X(r) \simeq 
    \left\{
    \begin{array}{ll}
        r^{1-\frac{1}{Q_G}} &\quad \textup{if }r=1 \\
        r/\log(r) & \quad \textup{if }r\geq 2
    \end{array}
    \right.
    \]
    where $Q_G\in\N$ is the conformal dimension of the boundary of the symmetric space $\HH^m_{\mathbb{F}}$ associated to $G$ (for the $SL(2,\R)$ case, see Question \ref{qu:hypplane}) \cite{HumeMackTess-PprofLie}.
    \item The Diestel-Leader graphs $DL(m,n)$ with $m,n\geq 2$ all satisfy
    \[
        \sep_{DL(m,n)}(r)\simeq r/\log(r)
    \]
    \cite{HumeMackTess-PprofLie}
\end{itemize}
 
In particularly nice cases, such as all of the examples above, Theorem \ref{sepboundscw} can be upgraded to the following:

\begin{corollary}\label{cwpcalc}
    Let $X$ be a bounded degree graph such that $\sep_X(r)\simeq r^a\log(r)^b$ for some $a\in (0,1]$ and $b\in\R$. Then, for every $p\in[1,+\infty]$,
    \[
        \cw^p_X(r) \simeq r^{a+\frac1p}\log(r)^b
    \]
    In particular, $\sep_X\simeq\tw_X\simeq\pw_X\simeq\cw_X\simeq r^a\log(r)^b$.
\end{corollary}
As a result, we can calculate all the invariants mentioned above in these situations.

\subsection{Bounds on bandwidth}

For bandwidth profiles we know much less. One simple lower bound comes from the growth rate.

\begin{definition}
    Let $X$ be a bounded degree graph. The cogrowth function $\kappa_X:\N\to\N$ is given by 
    \[
        \kappa_X(r)=\min\setcon{k}{\exists A\leq X,\ |VA|= r,\ \diam(A)= k}
    \]
    where $\diam(A)$ is measured using the shortest path metric on $A$ (not using the shortest path metric on $X$).
\end{definition}

\begin{proposition}\label{prop:bwgrowth}
    Let $X$ be a bounded degree graph. Then
    \[
     \bw_X(r) \geq \frac{r-1}{\kappa_X(r)}
    \]
\end{proposition}
In particular, $\bw_X(r)\gtrsim r/\kappa_X(r)$.

We can also find na\"ive upper bounds on $p$-bandwidth

\begin{proposition}\label{prop:boundpbw}
    Let $X$ be a bounded degree graph. Then
    \[
     \bw_X(r)\leq \bw^p_X(r) \lesssim_{\Delta_X,p} r^{\frac1p}\bw_X(r)
    \]
\end{proposition}
Unlike the result for cutwidth, the upper bound is very far from sharp. In particular, for the infinite $3$-regular tree $T$
\[
    \bw^1_T(r)=\cw^1_T(r)\lesssim r\log(r)
\]
while the proposition above only yields an upper bound of $r^2/\log(r)$. \medskip

We view the invariants $\bw^p$ as interpolating between the bandwith $\bw^\infty$ and $\bw^1=\cw^1$ which is determined (up to a possible logarithmic error) by the cutwidth.
\medskip

More interestingly, from \cite[Theorem 5]{BPTW} we immediately deduce

\begin{proposition}\label{prop:bwsep}
    Let $X$ be a bounded degree graph. Then,
    \[
    \bw_X(r)\lesssim r/\log(r/\sep_X(r))
    \]
    In particular, if $\sep_X(r)\lesssim r^a\log(r)^b$ for some $a\in[0,1]$ and $b\in\R$, then
    \[
    \bw_X(r)\lesssim \left\{ \begin{array}{ll}
        r/\log(r) & a<1 \\
        r/\log\log(r) & a=1,\ b<0
    \end{array}\right.
    \]
\end{proposition}
A particularly interesting application of this result is the following immediate combination of Propositions \ref{prop:bwgrowth} and \ref{prop:bwsep}.

\begin{corollary}\label{cor:bwrlogr}
    Let $X$ be a bounded degree graph with exponential growth that satisfies $\sep_X(r)\lesssim r^a$ for some $a<1$. Then
    \[
    \bw_X(r)\simeq r/\log(r)
    \]
\end{corollary}
Some examples of graphs satisfying the above hypotheses include minor-free bounded degree graphs\footnote{specifically the disjoint union of all finite graphs of maximum degree $d$ with no $H$-minor is an example whenever $d\geq 3$ and $H$ is not a forest} \cite{Alon-Seymour-Thomas}, and Cayley graphs of finitely generated groups which are
\begin{itemize}
    \item non-elementary hyperbolic,
    \item relatively hyperbolic with virtually nilpotent peripheral subgroups,
    \item $H\times N$ where $H$ is non-elementary hyperbolic and $N$ is virtually nilpotent
\end{itemize}
and their exponential growth finitely generated subgroups. \cite{HumeMackTess-Pprof,HumeMackTess-PprofLie}

\subsection{Recognising graphs containing expanders}

The separation profile detects whether a bounded degree graph admits a family of expander graphs. This fact was noted in \cite{HumSepExp} as a consequence of a theorem proving a $\simeq$-equivalent definition of the separation profile in terms of the Cheeger constant, but was actually known prior to this. We first recall this definition of an expander.

\begin{definition}
    A family of finite graphs $(\Gamma_n)_{n\in\N}$ is called an \textbf{expander} if $|V\Gamma_n|\to\infty$ as $n\to \infty$ and $\inf_n h(\Gamma_n)>0$, where 
    \[
        h(\Gamma)=\min\setcon{\frac{|\partial A|}{|A|}}{A\subset V\Gamma,\ |A|\leq \frac{|V\Gamma|}{2}}
    \]
    and $\partial A=\setcon{v\in V\Gamma}{d_\Gamma(v,A)=1}$.
\end{definition}

Here we note the following generalisation of \cite[Theorem 1.3]{HumSepExp}, which is proved by combining \cite[Theorem 8]{BPTW}, \cite[Proposition 2.2]{HumSepExp} and the comparisons between cutsize, cutwidth and bandwidth discussed above.

\begin{theorem}
    Let $X$ be a bounded degree graph. The following are equivalent:
    \begin{enumerate}
        \item there is a family of subgraphs $(\Gamma_n)_{n\in\N}$ of $X$ which is an expander,
        \item $\limsup_{r\to\infty} \frac{1}{r}\sep_X(r) >0$,
        \item $\limsup_{r\to\infty} \frac{1}{r}\cw_X(r) >0$,
        \item $\limsup_{r\to\infty} \frac{1}{r}\bw_X(r) >0$.
    \end{enumerate}
\end{theorem}

\subsection{Graph width bounds and coarse wirings}

We note that the $p=1$ case is almost useless as an invariant.

\begin{lemma}\label{lem:wid1useless} For any graphs $G$ and $X$,
    \[
        \wid^{G,1}_X(r) =\min\{r,|VX|\}
    \]
\end{lemma}

Also, the $p=\infty$ case is only of interest for ``badly-connected'' graphs $G$:

\begin{proposition}\label{widinftygrids} Let $G$ be a graph which contains, for each $m\in\N$, a topological subdivision of the $m\times m$ square grid. Then for every graph $X$,
    \[
        \wid^{G,\infty}_X(r) \simeq_{\Delta_X} 1.
    \]    
\end{proposition}

For intermediate values of $p$, we believe that the invariants are much more interesting (cf.\ Questions \ref{qu:bwptree} and \ref{qu:bwpgrid}). Inspired by Proposition \ref{widinftygrids}, we define the following more sensitive version of $1$-width.

\begin{definition}
    Let $\Gamma$ and $G$ be graphs. A \textbf{$k$-slim} $G$-decomposition of $\Gamma$ is a $G$-decomposition where $|X_g|\leq k$ for all $g\in VG$. We define
    \[
    {}^k\wid^{G,1}(\Gamma) = \min \sum_{g\in VG} |X_g|^p
    \]
    where the minimum is taken over all $k$-slim $G$ decompositions. If no such decomposition exists, we write ${}^k\wid^{G,1}(\Gamma)=+\infty$. Finally, for any graph $X$, we define
    \[
        {}^k\wid^{G,1}_X(r)=\max\setcon{{}^k\wid^{G,1}(\Gamma)}{\Gamma\leq X,\ |V\Gamma|\leq r}
    \]
\end{definition}
We could also define ${}^k\wid^{G,p}_X(r)$ for all $p\in[1,\infty)$, but it is always the case that
\[
{}^k\wid^{G,p}_X(r) \simeq_{k,p} ({}^k\wid^{G,1}_X(r))^{1/p}
\]

Note that the following are obviously equivalent:
\begin{itemize}
    \item for every $k$, there is an $r$ such that ${}^k\wid^{G,1}_X(r)=+\infty$,
    \item $\wid^{G,\infty}_X(r)\not\simeq 1$
\end{itemize}
So, for every $G$ and $X$, at least one of ${}^k\wid^{G,1}_X(r)$ and $\wid^{G,\infty}_X(r)$ is non-trivial. Furthermore, the two invariants above have equivalent descriptions in terms of coarse wirings, as introduced in \cite{BarrettHume}. To avoid more lengthy definitions in the introduction, let us say roughly that a $k$-coarse wiring is a continuous map $f:\Gamma\to G$ which maps vertices to vertices, paths to walks, and which ``uses'' each vertex and edge no more than $k$-times (precise definitions are given in \S\ref{sec:wiring}). Given two graphs $X,G$, $\wir^k_{X\to G}(r)$ is the minimal $m$ such that every $r$-vertex subgraph $\Gamma$ of $X$ admits a $k$-coarse wiring into $G$ whose image is contained in a subgraph of $G$ with $m$ vertices. We also define $\para_{X\to G}(r)$ to be the minimal $\ell$ such that every $r$-vertex subgraph of $X$ admits an $\ell$-coarse wiring into $G$. These invariants are connected to graph width as follows:

\begin{proposition}\label{prop:widinftywiring}
    Let $X$ and $G$ be graphs. Then for every $r\in\N$
    \[
        \wid^{G,\infty}_X(r) \leq 2 \para_{X\to G}(r)
    \]
    If, in addition, $X$ has bounded degree, then for every $r\in\N$
    \[
        \para_{X\to G}(r) \leq \max\{1,\Delta_X\}\wid^{G,\infty}_X(r)
    \]
\end{proposition}

\begin{proposition}\label{prop:wid1wiring}
    For every $k$, every finite graph $\Gamma$ and every graph $G$,
    \begin{itemize}
        \item if there is a coarse $k$-wiring $f:\Gamma\to G$ with volume at most $V$, then ${}^{2k}\wid^{G,1}(\Gamma)\leq 2kV$,
        \item if ${}^k\wid^{G,1}(\Gamma)\leq V$ then there is a coarse $\Delta_\Gamma k$-wiring $\Gamma\to G$ with volume at most $(1+\Delta_\Gamma)V$.
    \end{itemize}
\end{proposition}
Coarse wiring volumes have been computed in the case where $X_d$ is the disjoint union of all finite graphs with maximum degree $d\geq 3$, and $G$ is the Cayley graph of a lattice\footnote{a discrete subgroup of the automorphism group with compact quotient} in either a Euclidean space, a real hyperbolic space or a symmetric space whose noncompact factor has rank at least 2.

\begin{corollary}
    If $G$ is a lattice in $\R^n$ or $\HH_\R^{n+1}$ for $n\geq 2$ then for all $k$ sufficiently large
    \[
    {}^k\wid^{G,1}_{X_d}(r) \simeq r^{1+\frac1n}
    \]
    If $G$ is a lattice in a symmetric space whose noncompact factor has rank at least 2, then for all $k$ sufficiently large
    \[
    {}^k\wid^{G,1}_{X_d}(r) \simeq r\log(r)
    \]
\end{corollary}
The case of a lattice in the hyperbolic plane is open, the known bounds are
\[
    \exp(r^\frac12) \lesssim {}^k\wid^{G,1}_{X_d}(r)\lesssim \exp(r)
\]
for all sufficiently large $k$. Proving $\cw_G(r)\simeq \log(r)$ (see Question \ref{qu:hypplane}) would immediately imply that ${}^k\wid^{G,1}_{X_d} (r)\simeq \exp(r)$.

Finally, we note that for certain examples of graphs $X$ and $G$, $\wir^k_{X\to G}$ changes $\simeq$ equivalence class infinitely many times as $k$ increases \cite{Raistrick-wiring}. Therefore, the same also applies to ${}^k\wid^{G,1}_{X}(r)$.

\subsection{Acknowledgements}
This research was conducted during various summer research programmes held at the University of Bristol under the supervision of the second author. The first author was participating in a STEM Research Project from Global Summer, the third and fourth authors were participating in Summer Research Projects funded by the University of Bristol. The fourth author also gratefully acknowledges the support of the Bei Shan Tang Foundation through the Lee Hysan Memorial Scholarship for Overseas Studies.

The second author was supported by the EPSRC grant EP/V027360/1 ``Coarse geometry of groups and spaces''.

The authors would like to thank David Ellis for pointing out several references, Alice Kerr for bringing the reference \cite{Alon-Seymour-Thomas} to their attention, and Romain Tessera for helpful comments on a draft version of the paper.

Finally, the authors would like to thank Jesse Geneson for pointing out two small errors in the published version of Proposition \ref{prop:widinftywiring}, namely that $\wid$ and $\para$ have been reversed in the statement, and that the multiplicative error in the second inequality should be $\max\{1,\Delta_X\}$ rather than $\Delta_X$.

\section{Monotonicity proofs}\label{sec:invariants}

The goal of this section is to prove the three monotonicity results: Theorems \ref{thm:monotoneLA}, \ref{thm:monotonedecomp} and Proposition \ref{prop:widthGinv}. Each utilises a similar construction so we begin with this.

\subsection{Constructing comparison graphs}\label{sec:buildGamma'}

Let $X,Y$ be graphs and let $\phi:VX\to VY$ be a $\kappa$-regular map. For each pair $y,y\in VY$ with $d_Y(y,y')\leq\kappa$ choose a minimal length path $\gamma_{y,y'}$ from $y$ to $y'$ in $Y$. Given a finite subgraph $\Gamma\leq X$, we define the following finite subgraph of $Y$
\[
\Gamma^\phi = \phi(V\Gamma)\cup \bigcup_{xx'\in E\Gamma} \gamma_{\phi(x),\phi(x')}
\]
This procedure has the following notable qualities:
\begin{itemize}
    \item $\frac{1}{\kappa}|V\Gamma|\leq |V\Gamma^\phi| \leq |V\Gamma|+(\kappa-1)|E\Gamma| \leq \left(1+\frac{\kappa-1}{2}\Delta_\Gamma\right)|V\Gamma|$,
    \item if $\Delta_Y<+\infty$, then for every $y\in V\Gamma^\phi$ there are at most $\kappa\Delta_\Gamma (1+\Delta_Y)^\kappa$ edges $xx'\in E\Gamma$ such that $y\in\gamma_{\phi(x),\phi(x')}$.
\end{itemize}
The first of these is straightforward. For the second, note that if $y\in\gamma_{\phi(x),\phi(x')}$ then $d_Y(y,\phi(x))\leq\kappa$. Hence, $x\in \phi^{-1}(B_Y(y,\kappa))$ where $B_Y(y,\kappa)$ the closed ball of radius $\kappa$ centred at $y$. As $\phi$ is $\kappa$-regular, and $Y$ has bounded degree, there are at most
 $\kappa (1+\Delta_Y)^\kappa$ vertices $x\in VX$ such that $y\in\gamma_{\phi(x),\phi(x')}$ for some $x'$ and therefore at most $\kappa \Delta_X(1+\Delta_Y)^\kappa$ edges $xx'\in EX$ such that $y\in\gamma_{\phi(x),\phi(x')}$.
\medskip

In each of the following proofs, we will show that for any of the aforementioned measures of width, the width of $\Gamma^\phi$ cannot be significantly smaller than the width of $\Gamma$.

\subsection{Monotonicity for $p$-cutwidth and $p$-bandwidth}
We recall the statement of Theorem \ref{thm:monotoneLA} for convenience.

\begin{theorem*} Let $X,Y$ be bounded degree graphs with maximum degrees $\Delta_X,\Delta_Y$ respectively. If there is a $\kappa$-regular map $\phi:VX\to VY$, then 
\[
 \cw_X\lesssim_{\kappa,\Delta_X,\Delta_Y} \cw_Y \quad \textup{and} \quad \bw_X\lesssim_{\kappa,\Delta_X,\Delta_Y} \bw_Y
\]
If, in addition, $EY$ is infinite, then for every $p\in [1,+\infty)$ we have
\[
 \cw_X^p\lesssim_{\kappa,\Delta_X,\Delta_Y,p} \cw_Y^p \quad \textup{and} \quad \bw_X^p\lesssim_{\kappa,\Delta_X,\Delta_Y,p} \bw_Y^p
\]    
\end{theorem*}

Before beginning the proof, we give a quick lemma

\begin{lemma}
    Let $Y$ be a graph with infinitely many edges. Then, for all $p\in[1,\infty)$,
    \[
     \cw^p_Y(r) \gtrsim r^{\frac{1}{p}} \quad \textup{and} \quad \bw^p_Y(r) \gtrsim r^{\frac{1}{p}}
    \]
\end{lemma}
\begin{proof}
    For each $r$, let $\Gamma_r$ be an $r$-vertex subgraph of $Y$ with at least $\lfloor\frac{r}{2}\rfloor$ edges. For any bijection $f:V\Gamma_r\to\{1,\ldots,|V\Gamma_r|\}$, 
\[\|\cw_f\|_1=\|\bw_f\|_1 \geq |E\Gamma_r|\geq \left\lfloor\frac{r}{2}\right\rfloor\]
Moreover, for any function $g:A\to\N$ and any $p\in[1,\infty)$
\[
\|g\|_p = \left(\sum_{a\in A} g(a)^p\right)^\frac{1}{p} \geq \left(\sum_{a\in A} g(a)\right)^\frac{1}{p} = (\|g\|_1)^\frac1p
\]
Thus, $\cw^p_Y\gtrsim r^{\frac1p}$ and $\bw^p_Y\gtrsim r^{\frac1p}$ as required.
\end{proof}

\begin{proof}[Proof of Theorem \ref{thm:monotoneLA}]
Let $\Gamma$ be a finite subgraph of $X$, and let $\Gamma^\phi$ be the corresponding subgraph of $Y$.

Given a bijection $g:V\Gamma^\phi\to \{1,\ldots,|V\Gamma^\phi|\}$, choose a bijection $f:V\Gamma\to\{1,\ldots,|V\Gamma|\}$ with the property that $f(v)\leq f(v')$ implies $g(\phi(v))\leq g(\phi(v'))$. In particular, this means that the sequence $a_i=g(\phi(f^{-1}(i)))$ is monotonically non-decreasing.
\medskip 

\noindent\textbf{Bandwidth:} We claim that for every edge $vv'\in E\Gamma$ such that $|f(v)-f(v')|> \kappa^2$, there is some edge $ww'\in E\Gamma^\phi$ satisfying
\begin{itemize}
	\item $d_Y(\phi(v),w)\leq \kappa$, and
	\item $|f(v)-f(v')| \leq \kappa^2 |g(w)-g(w')|$.
\end{itemize}
Given the claim we complete the proof as follows. By the first condition above there is a uniform bound (call it $C$) on the number of edges $vv'$ which yield the same edge $ww'$.
Thus for $p\in[1,\infty)$,
\begin{align*}
 \bw^p(\Gamma) 	& = \min_f\left(\sum_{vv'\in E\Gamma} |f(v)-f(v')|^p\right)^{1/p}
 		\\
 				& \leq \min_g\left(C\sum_{ww'\in E\Gamma^\phi} (\kappa^2|g(w)-g(w')|)^p\right)^{1/p} + \left(\sum_{vv'\in E\Gamma} (\kappa^2)^p\right)^{1/p}
 		\\
 				& \leq \kappa^2\left(C^{1/p} \bw^p(\Gamma^\phi) + |E\Gamma|^{1/p}\right)
 		\\
 				& \lesssim \bw^p_Y(\max\{|V\Gamma^\phi|,|E\Gamma|\})
            \\
                    & \lesssim \bw^p_Y(|\Gamma|)
\end{align*}
Note that the penultimate step uses the assumption that $\bw^p_Y(r)\gtrsim r^{1/p}$, and the last step uses the facts $|V\Gamma^\phi|\leq \left(1+\frac{\kappa-1}{2}\Delta_X\right)|V\Gamma|$ and $|E\Gamma|\leq \frac{\Delta_X}{2}|V\Gamma|$. For $p=\infty$,
\begin{align*}
 \bw^\infty(\Gamma) 	& = \min_f\max_{vv'\in E\Gamma} |f(v)-f(v')|
 		\\
 				& \leq \min_g\max_{ww'\in E\Gamma^\phi} \kappa^2|g(w)-g(w')| + 2\kappa^2
 		\\
 				& \leq \kappa^2\left(\bw^\infty(\Gamma^\phi)+1\right).
            \\
                    & \lesssim \bw^\infty_Y(|V\Gamma|)
\end{align*}

It remains to prove the claim. Suppose $|f(v)-f(v')|=\ell> \kappa^2$. Without loss of generality, assume $k=f(v)<f(v')=k+\ell$. Since $f$ and $g$ are injective, we have
\[
 \abs{g(\phi(\{f^{-1}(k),\ldots,f^{-1}(k+\ell)\}))} \geq \ell/\kappa
\]
and $g(\phi(f^{-1}(k)))\leq \ldots \leq g(\phi(f^{-1}(k+\ell)))$, so $g(\phi(f^{-1}(k+\ell)))-g(\phi(f^{-1}(k)))\geq \ell/\kappa$. In particular, $\phi(v)\neq\phi(v')$.

Since $vv'\in E\Gamma$, $d_Y(\phi(v),\phi(v'))\leq \kappa$. Let $\phi(v)=w_0,w_1,\ldots,w_m=\phi(v')$ be the vertices of a path in $\Gamma^\phi$ with $m\leq \kappa$. Since $g(w_m)-g(w_0)\geq \ell/\kappa$, there is some $i$ such that $|g(w_{i+1})-g(w_i)|\geq \ell/(\kappa^2)$. We choose $w=w_i$ and $w'=w_{i+1}$. By construction $ww'\in E\Gamma^\phi$, $d_Y(\phi(v),w)\leq \kappa$ and $|f(v)-f(v')|\leq \kappa^2|g(w)-g(w')|$, completing the claim.
\medskip

\noindent\textbf{Cutwidth:} We use the same bijection $f:V\Gamma\to\{1,\ldots,|V\Gamma|\}$. Fix $1<k\leq |V\Gamma|$. For convenience define
\begin{align*}
F_k & = \setcon{vv'\in E\Gamma}{f(v)\leq k< f(v')} \\
G_\ell & = \setcon{ww'\in E\Gamma^\phi}{g(w)\leq \ell< g(w')}
\end{align*}
and write $f_k=|F_k|$, $g_\ell=|G_\ell|$. We claim that $f_{k}\leq Cg_{a_k}+\kappa\Delta_X$ for some constant $C$ which depends only on $\kappa$ and $\Delta_Y$. We recall that $a_k=g(\phi(f^{-1}(k)))$.

Given the claim, we see that for $p=\infty$,
\begin{align*}
 \cw^\infty(\Gamma) 	& = \min_f\max_k f_k
 		\\
 				& \leq \min_g\max_{\ell} Cg_{\ell}+\kappa\Delta_X 
 		\\
 				& \leq C \cw^\infty(\Gamma^\phi) + \kappa\Delta_X.
\end{align*}
When $p\in[1,\infty)$, using the fact that for any $\ell$ there are at most $\kappa$ values of $k$ such that $a_k=\ell$, we have
\begin{align*}
 \cw^p(\Gamma) 	& = \min_f\left(\sum_k f_k^p\right)^{1/p}
 		\\
 				& \leq \min_g\left(\sum_\ell\kappa\left(Cg_\ell+\kappa\Delta_X \right)^p\right)^{1/p} 
 		\\
 				& \leq C\kappa^{1/p}\min_g\left(\sum_\ell\left(g_\ell\right)^p\right)^{1/p} + C\kappa^{1+1/p}\Delta_X |V\Gamma^\phi|^{1/p}
  		\\
 				& \leq C\kappa^{1/p}\max\left\{\min_g\left(\sum_l\left(g_l\right)^p\right)^{1/p}, \kappa\Delta_X |V\Gamma^\phi|^{1/p}\right\}
 		\\
 				& \lesssim \cw^p_Y(|V\Gamma^\phi|).
    	\\
 				& \lesssim \cw^p_Y(|V\Gamma|).
\end{align*}
Note that the final step uses the assumption made at the start of the proof that $\cw^p_Y(r) \gtrsim r^{1/p}$.

Now we prove the claim. For each edge in the set $F_k$, we have $g(\phi(v))\leq a_k \leq g(\phi(v'))$. Since $\phi$ has preimages of cardinality at most $\kappa$ there are at most $\kappa\Delta_X $ edges in $E\Gamma$ with an end vertex $w$ such that $g(\phi(w))=a_k$.

If $vv'\in F_k$ is not one of these edges, then $g(\phi(v))\leq a_k < g(\phi(v'))$. There is a path in $\Gamma^\phi$ connecting $\phi(v)$ to $\phi(v')$ with length at most $\kappa$, and there is some edge $ww'$ on this path such that $g(w)\leq a_k < g(w')$.

As in the previous argument, since $d_Y(\phi(v),w))\leq \kappa$ there is a uniform bound $C$ on the number of edges $vv'$ which yield the same edge $ww'$ via this process. Thus,
\[
 f_k \leq Cg_{a_k}+\kappa\Delta_X,
\]
as required.
\end{proof}

\subsection{Monotonicity for $p$-graph width}

We first prove monotonicity with respect to the graphs being decomposed (Theorem \ref{thm:monotonedecomp}).

\begin{theorem*} Let $X,Y$ be bounded degree graphs with maximum degrees $\Delta_X,\Delta_Y$ respectively. If there is a regular map $\phi:VX\to VY$, then for every graph $G$ and every $p\in[1,\infty]$ we have
\[
 \wid^{G,p}_X\lesssim_{\Delta_X,\Delta_Y,G,p} \wid^{G,p}_Y
\]
\end{theorem*}
\begin{proof}[Proof of Theorem \ref{thm:monotonedecomp}]
    Let $\Gamma$ be a finite subgraph of $X$ and let $\Gamma^\phi\leq Y$ be the finite subgraph of $Y$ constructed in $\S\ref{sec:buildGamma'}$. Let $\{X'_g\}_{g\in G}$ be a $G$-decomposition of $\Gamma^\phi$ and, for each $g\in G$ let
    \[
    X_g=\setcon{x\in V\Gamma}{X'_g\cap V\overline\phi(x)\neq\emptyset}
    \]
    where $\overline\phi(x)=\{x\}\cup \bigcup_{xx'\in E\Gamma} \gamma_{\phi(x),\phi(x')}$.
\medskip

\noindent    \textbf{Step 1: $\{X_g\}_{g\in VG}$ is a $G$-decomposition of $\Gamma$} \smallskip

    We recall that, as $\{X'_g\}_{g\in G}$ is a $G$-decomposition of $\Gamma^\phi$:
    \begin{itemize}
        \item for every $y\in V\Gamma^\phi$, there is some $g\in G$ such that $y\in X'_g$;
        \item for every edge $yy'\in E\Gamma^\phi$, there is some $g\in G$ such that $y,y'\in X'_g$;
        \item for each $y\in V\Gamma^\phi$, the induced subgraph of $G$ with vertex set $G'_y:=\setcon{g\in VG}{y\in X'_g}$ is connected.
    \end{itemize}
    For each $x\in V\Gamma$, $\phi(x)\in X'_g$ for some $g$, so $x\in X_g$. If $xx'\in E\Gamma$, then $\phi(x),\phi(x')\in \overline\phi(x)$, so $x'\in X_g$ whenever $x\in X_g$.

By construction, for each $x\in V\Gamma$,
\begin{equation}\label{eq:pots}
	\setcon{g\in VG}{x\in X_g} = \bigcup_{y\in V\overline{\phi}(x)} \setcon{g\in VG}{y\in X'_g}.
\end{equation}
Each $\setcon{g\in VG}{y\in X'_g}$ is connected by assumption. Moreover, whenever $yy'\in E\Gamma^\phi$, there is some $g$ such that $y,y'\in X'_g$, hence
\[
\setcon{g\in VG}{y\in X'_g} \cap \setcon{g\in VG}{y'\in X'_g}\neq\emptyset
\]
As a result, $\bigcup_{y\in \overline{\phi}(x)} \setcon{g\in VG}{y\in X'_g}$ induces a connected subgraph of $G$, since $\overline{\phi}(x)$ is connected.
\medskip

\noindent\textbf{Step 2: bounding $\wid^{G,p}(\Gamma^\phi)$}\smallskip

For a fixed $y\in V\Gamma^\phi$, if $y\in\overline{\phi}(x)$ then $\phi(x)$ is contained in the ball of radius $2\kappa$ centred at $y$. As $|\phi^{-1}(z)|\leq \kappa$ for every $z\in VY$, we deduce that for each $x\in V\Gamma$,
\[
|\phi^{-1}(\overline\phi(x))|\leq\kappa(1+\Delta_X^{2\kappa})
\]
Thus, for each $g\in G$,
\[
 |X_g| \leq \kappa(1+\Delta_X^{2\kappa})|X'_g|
\]
Therefore, 
\[
\wid^{G,p}(\Gamma) \leq \kappa(1+\Delta_X )^{2\kappa}\wid^{G,p}(\Gamma^\phi).
\]
As this holds for all finite $\Gamma\leq X$, we have
$\wid^{G,p}_X(r) \lesssim \wid^{G,p}_Y(r)$.
\end{proof}

Next, we prove monotonicity for the graphs we are decomposing over (Proposition \ref{prop:widthGinv}).

\begin{proposition*} Let $G,G'$ be bounded degree graphs. If there is a $\kappa$-regular map $\phi:VG\to VG'$, then for every $p\in[1,+\infty]$ and every bounded degree graph $X$
    \[
        \wid^{G',p}_X(r) \lesssim_{p,\kappa,\Delta_G,\Delta_{G'}} \wid^{G,p}_X(r)
    \]
\end{proposition*}
\begin{proof}
    Let $\Gamma$ be a finite subgraph of $X$ and let $\{X_g\}_{g\in VG}$ be a $G$-decomposition of $\Gamma$. We will define a $G'$-decomposition $\{X'_{g'}\}_{g'\in VG'}$ of $\Gamma$ by describing the sets $\{G'_x\}_{x\in V\Gamma}$. Specifically, we apply the construction from \S\ref{sec:buildGamma'} to the regular map $\phi:VG\to VG'$ and define $G^\phi_x$ to be the graph obtained from $G_x$ under this construction. It is immediate that each $\Gamma^\phi_x$ is connected and $x\in X'_{\phi(g)}$ for every $g$ satisfying $x\in X_g$. Hence $\bigcup X'_{g'}=V\Gamma$. Finally, $x,y\in X'_{\phi(g)}$ whenever $x,y\in X_g$, so $\{X'_{g'}\}_{g'\in VG'}$ defines a $G'$-decomposition of $\Gamma$.

    Now $X'_{g'}=\bigcup_{\setcon{g}{g'\in\overline\phi(g)}} X_g$. Define a function $f:VG'\to VG$ with the property that $f(g')\in \setcon{g}{g'\in\overline\phi(g)}$ and
    \[
      |X_{f(g')}|=\max_{\setcon{g}{g'\in\overline\phi(g)}} |X_g|
    \]
    for all $g\in G'$. As each $\overline\phi(g)$ has diameter at most $2\kappa$ and $G$ and $G'$ have bounded degree, there is some $C=C(\kappa,\Delta_G,\Delta_{G'})$ such that 
    \begin{itemize}
        \item $|f^{-1}(g)|\leq C$ for all $g\in G$, and
        \item $|\setcon{g}{g'\in\overline\phi(g)}|\leq C$ for all $g'\in VG'$.
    \end{itemize}
    Hence, for $p=\infty$, we have
    \[
     \max_{g'\in VG'} |X'_{g'}| \leq C\max_{g\in VG} |X_{g}|
    \]
    while for $p\in[1,\infty)$ we have
    \begin{align*}
        \sum_{g'\in VG'}|X'_{g'}|^p & \leq \sum_{g'\in VG'} \left( \sum_{\setcon{g}{g'\in\overline\phi(g)}} |X_g| \right)^p \\
        & \leq C\sum_{g\in VG} (C|X_g|)^p \\
        & = C^{p+1} \sum_{g\in VG} |X_g|^p 
    \end{align*}
    As this holds for all finite subgraphs $\Gamma\leq X$ and all $G$-decompositions of $\Gamma$, we deduce that for all $p\in[1,\infty]$
    \[
    \wid^{G',p}_X(r) \lesssim_{p,\kappa,\Delta_G,\Delta_{G'}} \wid^{G,p}_X(r) \qedhere
    \]
\end{proof}

\section{Comparing and calculating invariants}

\subsection{Comparing new invariants}
As mentioned above, the proof of Proposition \ref{prop:compare} is already in the literature. Here we list a family of references from which the Proposition immediately follows.
\medskip

\begin{proof}[Proof of Proposition \ref{prop:compare}]
    Firstly, for any finite graph $\Gamma$, $\cut(\Gamma)\leq \tw(\Gamma)$ \cite[Lemma 2.3]{BenSchTim-12-separation-graphs}, so for any graph $X$
    \begin{align*}
    \sep_X(r)=&\max\setcon{\cut(\Gamma)}{\Gamma\leq X,\ |V\Gamma|\leq r} \\ \leq & \max\setcon{\tw(\Gamma)}{\Gamma\leq X,\ |V\Gamma|\leq r} = \tw_X(r)
    \end{align*}
    As every path is a tree, for any graph $\Gamma$, $\tw(\Gamma)\leq \pw(\Gamma)$, so
    \[
    \tw_X(r)\lesssim \pw_X(r)
    \]
    For any finite graph $\pw(\Gamma) \leq \cw(\Gamma)$ by \cite[Theorem 5.4]{Bodlaender_Classes} and $\cw(\Gamma)\leq \Delta_\Gamma\pw(\Gamma)$ by \cite{Chung-Seymour}, so for every bounded degree graph $X$:
    \[
     \pw_X(r)\simeq \cw_X(r).
    \]
    Finally, using \cite[Theorem 3.2]{MS89} and \cite[Corollary 2.2]{MPS83} we have
    \[
    \cw(\Gamma) \leq\left\lfloor \frac{\Delta_\Gamma}2\right\rfloor (\sn(\Gamma)-1)+1 \leq \left\lfloor \frac{\Delta_\Gamma}2\right\rfloor \tbw(\Gamma)+1\leq \left\lfloor \frac{\Delta_\Gamma}2\right\rfloor \bw(\Gamma)+1
    \]
    where $\sn(\Gamma)$ is the \textbf{search number}\footnote{We will not define search number in this paper as it is not needed elsewhere.} of $\Gamma$ and $\tbw(\Gamma)$ -- the \textbf{topological bandwidth} of $\Gamma$ -- is the minimal bandwidth over all subdivisions of $\Gamma$. It is then immediate from the definition that $\tbw(\Gamma)\leq \bw(\Gamma)$.  Hence, for any bounded degree graph $X$,
    \[
    \cw_X(r)\lesssim \bw_X(r) \qedhere
    \]
\end{proof}

\begin{remark}\label{rem:otherinvs}
    As $\tbw(\Gamma) \leq \cw(\Gamma)$ for any finite graph $\Gamma$ \cite[Corollary 2.1]{MPS83}, one additional consequence of this result is that we could also define coarse graph invariants based on topological bandwidth ($\tbw$) and search number $(\sn)$:
    \begin{align*}
        \tbw_X(r) & = \max\setcon{\tbw(\Gamma)}{\Gamma\leq X,\ |V\Gamma|\leq r} \\
        \sn_X(r) & = \max\setcon{\sn(\Gamma)}{\Gamma\leq X,\ |V\Gamma|\leq r}
    \end{align*}
    For any bounded degree graph $X$ we have
    \[
    \cw_X \simeq \tbw_X \simeq \sn_X
    \]
    Additionally, for each $p\in[1,+\infty]$ we may define the $p$-\textbf{vertex separation} of a finite graph $\Gamma$ (denoted $\vs^p(\Gamma)$) as the minimum over all orderings $f$ of $\|\vs_f\|_p$ where $\vs:\{1\ldots,|V\Gamma|\}\to\N$ is defined by
    \[
     \vs_f(i)=\abs{\setcon{x\in V\Gamma}{\exists xy\in E\Gamma,\ f(x)\leq i< f(y)}}
    \]
    Here, $\vs^\infty$ is the typical definition of vertex separation, and $\vs^1$ is \textbf{sumcut} which is also equal to \textbf{profile}. It is immediate that for any ordering $f$ and any $i$
    \begin{align*}
    \frac{1}{\Delta_\Gamma}\vs_f(i) & \leq \cw_f(i):=\max_i \abs{\setcon{x\in V\Gamma}{\exists xy\in E\Gamma,\ f(x)\leq i< f(y)}} \\ & \leq \vs_f(i)
    \end{align*}
    Hence, defining $\vs^p_X(r)=\max\setcon{\vs^p(\Gamma)}{\Gamma\leq X,\ |V\Gamma|\leq r}$, we have
    \[
     \vs_X^p\simeq \cw^p_X
    \]
    for every bounded degree graph $X$ and every $p\in[1,+\infty]$.
    
    Thus, whenever there is a $\kappa$-regular map $\phi:VX\to VY$ where $X,Y$ are bounded degree graphs,
    \begin{align*}
        \tbw_X & \lesssim_{\kappa,\Delta_X,\Delta_Y} \tbw_Y  \\ \sn_X &\lesssim_{\kappa,\Delta_X,\Delta_Y} \sn_Y  \\
        \vs^p_X &\lesssim_{\kappa,\Delta_X,\Delta_Y,p} \vs^p_Y
    \end{align*}
\end{remark}

\subsection{Comparisons with the separation profile}
Our goal in this section is to prove Theorem \ref{sepboundscw} and Corollary \ref{cwpcalc}.

\begin{theorem*}
 Let $X$ be a graph with maximum degree $\Delta_X$. Then, for every $p\in[1,+\infty]$
    \[
     r^{\frac1p}\sep_X(r) \lesssim_{\Delta_X,p} \cw^p_X(r) \lesssim r^{\frac1p}\cw_X(r) \lesssim_{\Delta_X,p} r^{\frac1p}\sep_X(r) (1+\log_2(r))
    \]
\end{theorem*}

Firstly, for any finite graph $\Gamma$, any $p\in[1,\infty)$ and any bijection $f:V\Gamma\to\{1,\dots,|V\Gamma|\}$
\[
    \left(\sum_{i=1}^{|V\Gamma|} \cw_f(i)^p\right)^{\frac1p} \leq \left(\sum_{i=1}^{|V\Gamma|} \max_i\{\cw_f(i)\}^p\right)^{\frac1p} \leq |V\Gamma|^{\frac1p} \max_i\{\cw_f(i)\}
\]
so for any graph $X$ and any $p\in[1,\infty)$
\[
\cw^p_X(r) \leq r^{\frac1p}\cw_X(r)
\]
for all $r\in\N$. \medskip

The upper bound on $\cw_X(r)$ is a consequence of the following divide-and-conquer lemma.

\begin{lemma}
    Let $X$ be a bounded degree graph. For every $r$,
    \[
     \cw_X(r) \leq \cw_X(r/2) + \Delta_X\sep_X(r)
    \]
\end{lemma}
\begin{proof}
    Let $\Gamma$ be a finite subgraph of $X$ with at most $r$ vertices and choose $C\subseteq V\Gamma$ such that $|C|\leq\sep_X(r)$ and every connected component of $\Gamma-C$ has at most $r/2$ vertices. This is possible by the definition of $\sep_X(r)$. Now, let $A_1,\ldots,A_k$ be the connected components of $\Gamma-C$. For each connected component, choose an ordering $f_\ell:\{1,\ldots,|VA_\ell|\}\to\N$ such that
    \[
     \|\cw_{f_\ell}\|_\infty = \cw^\infty(A_\ell) \leq \cw^\infty_X(r/2)
    \]
    Now define $f:V\Gamma\to\{1,\ldots,|V\Gamma|\}$ by
    \[ 
        f(v) = \left\{
                        \begin{array}{ll}
                            \sum_{j=1}^{\ell-1} |VA_j|+f_\ell(v) & v\in A_\ell \smallskip \\
                            \sum_{j=1}^{k} |VA_j| + g(v) & v\in C 
                        \end{array}\right.
    \]
    where $g:C\to \{1,\ldots,|C|\}$ is any bijection. As the $A_\ell$ are the connected components of $\Gamma-C$, every edge in $E\Gamma$ is either in some $EA_\ell$ or has an end vertex in $C$. Hence, if $i\in f^{-1}(A_\ell)$ for some $\ell$, then
    \begin{align*}
     \cw_f(i) \leq & \abs{\setcon{vw\in EA_\ell}{f_\ell(v) \leq i - \sum_{j=1}^{\ell-1} |VA_j| < f_\ell(w)}} \smallskip\\ & + \abs{\setcon{vw\in E\Gamma}{w\in C}} \smallskip \\
      \leq & \|\cw_{f_\ell}\|_\infty + \Delta_X|C|
    \end{align*}
    If $f^{-1}(i)\in C$, then $\cw_f(i)\leq \Delta_X|C|$. Hence,
    \[
    \max_i\{\cw_f(i)\} \leq \max_\ell \{\|\cw_{f_\ell}\|_\infty\}+\Delta_X|C| \leq \cw_X(r/2) + \Delta_X\sep_X(r)
    \]
    As this holds for every subgraph $\Gamma$ of $X$ with at most $r$ vertices,
    \[
    \cw_X(r) \leq \cw_X(r/2) + \Delta_X\sep_X(r) \qedhere
    \]
\end{proof}

Iterating this procedure we obtain the following:

\begin{corollary}\label{cor:cwsep}
    For any bounded degree graph $X$,
    \[
    \cw_X(r) \leq \Delta_X\sum_{\ell=0}^{\lfloor \log_2r\rfloor} \sep_X\left(\frac{r}{2^\ell}\right) \leq \Delta_X(1+\log_2r) \sep_X(r)
    \]
\end{corollary}
\begin{proof}
    We apply Proposition \ref{prop:compare} $\lfloor\log_2r\rfloor$ times to obtain
    \[
    \cw_X(r) \leq \Delta_X\sum_{\ell=0}^{\lfloor \log_2r\rfloor} \sep_X\left(\frac{r}{2^\ell}\right) + \cw_X\left(\frac{r}{2^{\lfloor \log_2r\rfloor}}\right)
    \]
    As $\lfloor \log_2r\rfloor>\log_2r-1$, $\frac{r}{2^{\lfloor \log_2r\rfloor}}<2$, so $\cw_X\left(\frac{r}{2^{\lfloor \log_2r\rfloor}}\right)=0$, as required.
\end{proof}

The remaining step in the proof of Theorem \ref{sepboundscw} is the bound $r^{\frac1p}\sep_X(r) \lesssim_{\Delta_X,p} \cw^p_X(r)$.

\begin{lemma}\label{lem:cwplower}
    Let $\Gamma$ be a finite graph and let $p\in [1,\infty)$. We have
\begin{align*}
 \cw^p(\Gamma) & \geq \left\lfloor\frac{|V\Gamma|}{3}\right\rfloor ^{\frac1p} \cut^{\frac23}(\Gamma).
\end{align*}
Hence, for any graph $X$,
\[
 \cw_X^p(r) \gtrsim r^{1/p}\sep_X(r)
\]
\end{lemma} 
\begin{proof}
Choose an ordering $f:V\Gamma\to\{1,\ldots, |V\Gamma|\}$, and note that for each $\frac{|V\Gamma|}{3}\leq i\leq \frac{2|V\Gamma|}{3}$,
\[
 C_i:=\setcon{v\in V\Gamma}{\exists vw\in E\Gamma,\ f(v)\leq i < f(w)}
\]
is a $\frac23$-cutset of $\Gamma$, since no connected component of $\Gamma\setminus C_i$ can contain a vertex in both $f^{-1}(\{1,\ldots,i\})$ and $f^{-1}(\{i+1,\ldots,|V\Gamma|\})$. Moreover, $|C_i|\leq \cw_f(i)$.
As there are at least $\left\lfloor\frac{|V\Gamma|}{3}\right\rfloor$ such values of $i$, we have
\[
 \left(\sum_{i=1}^{|V\Gamma|} |\cw_f(i)|^p\right)^\frac1p \geq \left\lfloor\frac{|V\Gamma|}{3}\right\rfloor ^{\frac1p} \cut^{\frac23}(\Gamma)
\]
As this holds for every function $f:V\Gamma\to\{1,\ldots,|V\Gamma|\}$,
\[
 \cw^p(\Gamma) \geq \left\lfloor\frac{|V\Gamma|}{3}\right\rfloor ^{\frac1p} \cut^{\frac23}(\Gamma).\qedhere
\]
\end{proof}

Now we move to the proof of Corollary \ref{cwpcalc}. When the separation profile is known to be sufficiently nice, the upper bound in Proposition \ref{prop:compare} can be improved.

\begin{corollary}\label{cor:cwsepspecific} Let $X$ be a bounded degree graph such that $\sep_X(r) \lesssim r^a\log(r)^b$ where $a\in[0,1]$ and $b\in\R$. Then
\[
\cw_X \lesssim \left\{ \begin{array}{ccc}
r^a\log(r)^b & \textup{if} & a>0, \\
\log(r)^{b+1} & \textup{if} & a=0.
\end{array} \right.
\]
\end{corollary}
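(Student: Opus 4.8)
The plan is to feed the hypothesis $\sep_X(r) \lesssim r^a\log(r)^b$ into the summation bound of Corollary~\ref{cor:cwsepgeneric} and evaluate the resulting sum $\sum_{i=0}^{\lceil\log_2 r\rceil} \sep_X(2^{-i}r)$ carefully in the two regimes $a>0$ and $a=0$. First I would fix the constant $C$ with $\sep_X(t)\le C t^a\log(t)^b$ for all $t$ large (absorbing small $t$ into the final constant, since the profiles are monotone and everything is taken up to $\simeq$), so that
\[
 \ec^\infty_X(r) \;\le\; \Delta(X)\sum_{i=0}^{\lceil\log_2 r\rceil} \sep_X(2^{-i}r) \;\le\; C\Delta(X)\sum_{i=0}^{\lceil\log_2 r\rceil} (2^{-i}r)^a\,\log(2^{-i}r)^b.
\]

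For the case $a>0$: here $\log(2^{-i}r)^b \le (\log r)^{|b|}$ up to a constant when $b\ge 0$, and is bounded by $(\log r)^{b}$ when $b<0$ provided $2^{-i}r$ stays bounded below — but I should be a little careful near $i=\lceil\log_2 r\rceil$ where the argument is $O(1)$; there the term $\sep_X(2^{-i}r)$ is just $O(1)$ and contributes only $O(\log r)$ in total, which is absorbed since $a>0$ forces $r^a\log(r)^b$ to dominate $\log r$. The main point is that $\sum_i (2^{-i}r)^a = r^a\sum_i 2^{-ia}$ is a convergent geometric series (as $a>0$), bounded by $r^a\cdot \frac{1}{1-2^{-a}}$, so pulling out the logarithmic factor $(\log r)^b$ (up to a bounded correction from the first few terms and the last few terms) gives $\ec^\infty_X(r)\lesssim r^a(\log r)^b$, as claimed.

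For the case $a=0$: now every term is $\sep_X(2^{-i}r) \le C(\log(2^{-i}r))^b \le C'(\log r)^b$ (again handling the tail where the argument is $O(1)$ separately, contributing $O(\log r)\cdot O(1)$ total, which is $\lesssim (\log r)^{b+1}$ when $b\ge 0$, and is negligible when $b<0$ since then the bulk sum already behaves like $(\log r)^{b+1}$ — I'd want to check this edge case, that $\sum_{i=0}^{\log_2 r}(\log(2^{-i}r))^b \simeq (\log r)^{b+1}$ even for $b\in(-1,0)$, by comparing the sum to $\int_0^{\log_2 r}(c(\log 2)(\log_2 r - i))^b\,di$). Since there are $1+\lceil\log_2 r\rceil$ terms, summing gives $\ec^\infty_X(r)\lesssim (\log r)\cdot(\log r)^b = (\log r)^{b+1}$.

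The only real subtlety — and the step I'd expect to need the most care — is the bookkeeping at the boundary of the range of summation, where $2^{-i}r$ becomes $O(1)$ and the bound $\sep_X(t)\lesssim t^a\log(t)^b$ is either vacuous or (for $b<0$) blows up spuriously; the honest fix is to split the sum at some fixed threshold $t_0$, bound $\sep_X(t)\le \sep_X(t_0)=O(1)$ for $t\le t_0$ (using monotonicity of the profile) and use the stated asymptotic bound only for $t\ge t_0$, then observe that the $O(1)$ tail contributes at most $O(\log r)$, which is dominated by the claimed bound in both cases. Everything else is a routine geometric-series / arithmetic estimate.
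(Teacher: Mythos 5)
Your approach is exactly the paper's: feed $\sep_X(r)\lesssim r^a\log(r)^b$ into the summation bound of Corollary~\ref{cor:cwsepgeneric} and estimate $\sum_{i=0}^{\lceil\log_2 r\rceil}\sep_X(2^{-i}r)$ via a geometric series (for $a>0$) or by counting terms (for $a=0$). In fact your treatment is somewhat more careful than the paper's: the paper pulls $\log(Cr)^b$ out of the sum as an upper bound, which is only legitimate when $b\geq 0$ (for $b<0$ one has $\log(C2^{-i}r)^b\geq\log(Cr)^b$), whereas you correctly flag the $b<0$ issue and the blow-up near $2^{-i}r=O(1)$, and the threshold-splitting fix you describe is the right repair. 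One small correction to your bookkeeping, which actually matters in one regime: after splitting at a fixed $t_0$, the tail $\{i: 2^{-i}r\leq t_0\}$ contains only $O(1)$ indices, each contributing $\sep_X(2^{-i}r)\leq\sep_X(t_0)=O(1)$, so the tail is $O(1)$, not $O(\log r)$. This distinction is needed when $a=0$ and $b\in(-1,0)$: there $\log(r)^{b+1}=o(\log r)$, so an $O(\log r)$ tail would not be absorbed, but the true $O(1)$ tail is swallowed by the additive constant in the definition of $\lesssim$. Your integral comparison $\sum_i(\log(2^{-i}r))^b\simeq(\log r)^{b+1}$ for $b>-1$ on the main range is correct.
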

\begin{proof}
By Corollary \ref{cor:cwsep}, there is some constant $C$ such that
\begin{align*}
 \cw_X(r) & \leq \Delta_X\sum_{\ell=0}^{\lfloor \log_2r\rfloor} \sep_X\left(\frac{r}{2^\ell}\right) \\
 					& \leq \Delta_X\sum_{\ell=0}^{\lfloor\log_2(r)\rfloor} C(C2^{-\ell}r+C)^a\log(C2^{-\ell}r+C)^b \\
 					& \leq \Delta_XC(2C)^a\log(2Cr)^b\sum_{\ell=0}^{\lfloor\log_2(r)\rfloor} (2^{-\ell}r)^a \end{align*}
If $a>0$, then $\sum_{\ell=0}^{\lfloor\log_2(r)\rfloor} (2^{-\ell}r)^a \leq \frac{r^a}{1-2^{-a}}$, so
\[
	\cw_X(r) 	\leq C'r^a \log(2Cr)^b\simeq r^a\log(r)^b.
\]
If $a=0$, then $\sum_{\ell=0}^{\lfloor\log_2(r)\rfloor} (2^{-\ell}r)^a = 1+\lfloor\log_2(r)\rfloor$, so 
\[
	\cw_X(r) 	\leq \Delta_X C\log(2Cr)^b(1+\lfloor\log_2(r)\rfloor) \simeq \log(r)^{b+1}.\qedhere
\]
\end{proof}

Finally, we combine the above results to prove Theorem \ref{cwpcalc}.

\begin{proof}
By Corollary \ref{cor:cwsepspecific}, if $\sep_X(r) \lesssim r^a\log(r)^b$, with $a\in(0,1]$ then for every $p\in[1,\infty)$
\[
 \cw^p_X(r) \lesssim r^{\frac1p}\cw_X(r) \lesssim r^{\frac1p+a}\log(r)^b
\]
If, in addition $\sep_X(r) \gtrsim r^a\log(r)^b$, then by Lemma \ref{lem:cwplower}
\[
 r^{\frac1p+a}\log(r)^b \lesssim r^\frac1p\sep_X(r) \lesssim r^\frac1p\cw_X(r) \lesssim \cw^p_X(r)\lesssim r^{\frac1p+a} \log(r)^b. \qedhere
\]
\end{proof}

\subsection{Bandwidth, cogrowth and separation}

Firstly, we give the elementary proofs of Propositions \ref{prop:bwgrowth} and \ref{prop:boundpbw}

\begin{proposition*}
    Let $X$ be a bounded degree graph. Then
    \[
     \bw_X(r) \geq \frac{r-1}{\kappa_X(r)}
    \]
\end{proposition*}
\begin{proof}
    Fix $r$ and let $\Gamma\leq X$ satisfy $|V\Gamma|=r$ and $\diam(\Gamma)= \kappa_X(r)$. Choose any ordering $f$ of $V\Gamma$ and let $v=f^{-1}(1)$, $w=f^{-1}(r)$. By assumption there is a path of length $\ell\leq \kappa_X(r)$ $v=v_0,v_1,\ldots,v_\ell=w$ from $v$ to $w$ in $\Gamma$, hence
    \[
    r-1 \leq \sum_{i=1}^\ell |f(v_{i-1})-f(v_{i})|
    \]
    so, for some $i$, $|f(v_{i-1})-f(v_{i})|\geq \frac{r-1}{\ell}\geq \frac{r-1}{\kappa_X(r)}$ as required.
\end{proof}

Next, we give an elementary comparison between bandwidth and $p$-bandwidth.

\begin{proposition*}
    Let $X$ be a bounded degree graph. Then
    \[
     \bw_X(r)\leq \bw^p_X(r) \lesssim_{\Delta_X,p} r^{\frac1p}\bw_X(r)
    \]
\end{proposition*}
\begin{proof}
    Let $\Gamma\leq X$ with $|V\Gamma|\leq r$. For any ordering $g$ and any $p\in[1,\infty)$
    \[
        \max_{xy\in E\Gamma}|g(x)-g(y)|\leq \left(\sum_{xy\in E\Gamma} |g(x)-g(y)|^p\right)^\frac1p
    \]
    Now, choose an ordering $f$ of $V\Gamma$ satisfying $\max_{xy\in E\Gamma}|f(x)-f(y)|\leq\bw_X(r)$. Then
    \begin{align*}
     \bw^p(\Gamma) & \leq \left(\sum_{xy\in E\Gamma} |f(x)-f(y)|^p\right)^\frac1p \\
     & \leq |E\Gamma|^\frac{1}{p} \bw^\infty_X(r) \\
     & \leq \left(\frac{\Delta_X}{2}|V\Gamma|\right)^\frac1p\bw^\infty_X(r) \\
     & \lesssim r^\frac1p\bw^\infty_X(r)
    \end{align*}
    As these bounds hold for all $\Gamma\leq X$ with $|V\Gamma|\leq r$,
    \[
        \bw_X(r)\leq \bw^p_X(r) \lesssim_{\Delta_X,p} r^{\frac1p}\bw_X(r)\qedhere
    \]
\end{proof}

We finish this section with the proofs of Proposition \ref{prop:bwsep} and Corollary \ref{cor:bwrlogr}.

\begin{proposition*}
    Let $X$ be a bounded degree graph. Then, for every $r$
    \[
    \bw_X(r)\leq \frac{6r}{\log_{\max\{2,\Delta_X\}}\left(\displaystyle\frac{r}{\sep^{\frac13}_X(r))}\right)}
    \]
    In particular, if $\sep_X(r)\lesssim r^a\log(r)^b$ for some $a\in[0,1]$ and $b\in\R$, then
    \[
    \bw_X(r)\lesssim \left\{ \begin{array}{ll}
        r/\log(r) & a<1 \\
        r/\log\log(r) & a=1,\ b<0
    \end{array}\right.
    \]
\end{proposition*}
\begin{proof}
The key ingredient in this proof is \cite[Theorem 5]{BPTW}, which states that for any finite graph $\Gamma$,
\[
\bw(\Gamma) \leq \frac{6|V\Gamma|}{\log_{\Delta}(|V\Gamma|/s(\Gamma))}
\]
where $\Delta=\max\{2,\Delta_\Gamma\}$ and $s(\Gamma)$ is the minimal cardinality of a subset $S\subseteq V\Gamma$ such that we may choose $A,B\subseteq V\Gamma$ with the following properties:
\begin{itemize}
    \item[(a)] $V = A \sqcup B \sqcup S$,
    \item[(b)] $|A|, |B|\leq \frac23|V\Gamma|$,
    \item[(c)] $E(A,B)=\emptyset$
\end{itemize}
Firstly, we prove that $s(\Gamma)\leq\cut^{\frac13}(\Gamma)$. Let $S$ be a $\frac13$-cutset of $\Gamma$ and let $A_1,\ldots,A_k$ be the vertex sets of connected components of $\Gamma-S$ ordered so that $\frac13|V\Gamma|\geq |A_1|\geq|A_2|\geq\ldots$. Choose $\ell$ minimal so that $|A_1\cup\ldots\cup A_\ell|>\frac13|V\Gamma|$ and set $A=A_1\cup\ldots\cup A_\ell$ and $B=A_{\ell+1}\cup \ldots\cup A_k$. It is immediate that conditions $(a)-(c)$ above are satisfied, so $s(\Gamma)\leq\cut^{\frac13}(\Gamma)$. Hence,
\[
\bw(\Gamma) \leq \frac{6|V\Gamma|}{\log_{\Delta}(|V\Gamma|/\cut^{\frac13}(\Gamma))}
\]
Taking the maximum over all subgraphs of $X$ with at most $r$ vertices, we see that
\[
\bw_X(r)\leq \frac{6r}{\log_{\Delta}\left(\displaystyle\frac{r}{\sep^{\frac13}_X(r))}\right)} \lesssim \frac{r}{\log(r/\sep_X(r))}
\]
Now if $\sep_X(r)\lesssim r^a\log_2(r)^b$, then there is some $C$ (which without loss of generality we assume is at least $2$) such that for every $r\geq 1$
\begin{align*}
    \bw_X(r)    & \leq \frac{6r}{\log_{\Delta}\left(\frac{r}{C(Cr)^a\log_2(Cr)^b+C}\right)}
\end{align*}
If $a=1$, $b<0$, then for all sufficiently large $r$
\begin{align*}
    \log_{\Delta}\left(\frac{r}{C(Cr)^a\log_2(Cr)^b+C}\right)  & \geq \log_{\Delta}\left(\frac{\log_2(r)^{-b}}{2C^2}\right)  \\ &\geq \log_\Delta\log_2(r) + \log_\Delta(-b/2C^2) \\ & \geq \frac{1}{2\log_2(\Delta)}\log_2\log_2(r)
\end{align*}
    Hence $\bw_X(r) \lesssim r/\log\log(r)$.
\medskip

If $a<1$, then for all sufficiently large $r$
\begin{align*}
    \log_{\Delta}\left(\frac{r}{C(Cr)^a\log_2(Cr)^b+C}\right) & \geq \log_{\Delta}\left(\frac{r^{1-a}}{2C^{1+a}\log_2(Cr)^{-b}}\right) \\ & \geq \frac{1-a}{2\log_2(\Delta)}\log_2(r)
\end{align*}
Hence $\bw_X(r) \lesssim r/\log(r)$.
\end{proof}

\begin{corollary*}
    Let $X$ be a bounded degree graph with exponential growth that satisfies $\sep_X(r)\lesssim r^a$ for some $a<1$. Then
    \[
    \bw_X(r)\simeq r/\log(r)
    \]  
\end{corollary*}
\begin{proof}
    As $X$ has exponential growth, $\kappa_X(r)\simeq \log(r)$, so $\bw_X(r)\gtrsim r/\log(r)$ by Proposition \ref{prop:bwgrowth}. As $\sep_X(r)\lesssim r^a$ for some $a<1$, $\bw_X(r)\lesssim r/\log(r)$ by Proposition \ref{prop:bwsep}.
\end{proof}

\subsection{Graph decompositions}

We start with the quick proofs of Lemma \ref{lem:wid1useless} and Proposition \ref{widinftygrids}.

\begin{proof}[Proof of Lemma \ref{lem:wid1useless}]
The goal is to show that for any graphs $G$ and $X$,
    \[
        \wid^{G,1}_X(r) =\min\{r,|VX|\}
    \]
Let $\Gamma\leq X$ with $|V\Gamma|= r$. For any $G$-decomposition $\{X_g\}_{g\in VG}$ of $\Gamma$, $\bigcup_{g\in G}X_g = V\Gamma$, we have
\[
 \sum_{g\in VG} |X_g| \geq |V\Gamma|=r
\]
Hence, $\wid^{G,1}_X(r) \geq \min\{r,|VX|\}$. The upper bound is obtained from the $G$-decomposition $X_g=V\Gamma$ for one $g\in VG$ and $X_{g'}=\emptyset$ for all $g'\in VG\setminus\{g\}$.    
\end{proof}

Let us recall Proposition \ref{widinftygrids}.

\begin{proposition*}
    Let $G$ be a graph which contains, for each $m\in\N$, a topological subdivision of the $m\times m$ square grid $G_m$\footnote{$VG_m=\{1,\ldots,m\}^2$, $EG_m=\setcon{(a,b)(c,d)}{|a-b|+|c-d|=1}$}. Then for every graph $X$,
    \[
        \wid^{G,\infty}_X(r) \simeq 1.
    \]
\end{proposition*} 
We will prove this using a pair of lemmas. It is likely that both are known to experts.

\begin{lemma} Let $G$ be a graph and let $G'$ be a topological subdivision of $G$. For every graph $X$
\[
    \wid^{G',\infty}_X(r) \leq \wid^{G,\infty}_X(r)
\]    
\end{lemma}
\begin{proof}
    Let $\Gamma$ be a finite graph and let $\{X_g\}_{g\in VG}$ be a $G$-decomposition of $\Gamma$. We define a $G'$-decomposition ${X'_{g'}}_{g'\in VG'}$ of $\Gamma$ as follows:
    \begin{itemize}
        \item if $g'\in VG'$ corresponds to a vertex $g\in VG$, set $X'_{g'}=X_g$
        \item if $g'\in VG'$ is a vertex created while subdiving the edge $g_0g_1\in EG$, set $X'_{g'}=X_{g_0}\cap X_{g_1}$. 
    \end{itemize}
    From the first condition it is clear that $\bigcup_{g'} X'_{g'}=V\Gamma$ and that for every edge $xy\in E\Gamma$ there is some $g'$ such that $x,y\in X'_{g'}$. Now the full subgraph of $G'$ with vertex set $G'_x=\setcon{g'\in VG'}{x\in X'_{g'}}$ is the image under the subdivision of the full subgraph of $G$ with vertex set $G_x=\setcon{g\in VG}{x\in X_{g}}$. The latter full subgraph is connected by assumption, therefore so is the full subgraph with vertex set $G'_x$.
\end{proof}

\begin{lemma}
    Let $\Gamma$ be an $m$-vertex graph. There is a $G_m$ decomposition $\{X_{(a,b)}\}_{1\leq a,b\leq m}$ of $\Gamma$ satisfying $\max\{|X_{(a,b)}|\}\leq \Delta_\Gamma+\max\{\Delta_\Gamma,1\}$
\end{lemma}
 \begin{proof}
     Enumerate $V\Gamma=\{x_1,\ldots,x_m\}$. Define 
     \begin{align*}
        G_{x_k} = & \{(k,k)\}\cup \\ & \bigcup_{\setcon{l<k}{x_kx_l\in E\Gamma}} \{(l,l),(l,l+1),\ldots,(l,k),(l+1,k),\ldots,(k,k)\}\cup \\ & \bigcup_{\setcon{l>k}{x_kx_l\in E\Gamma}} \{(k,k),(k,k+1),\ldots,(k,l),(k,l+1),\ldots,(l,l)\}
     \end{align*}
     We prove that $\{X_{(a,b)}\}_{(a,b)\in VG_m}$ given by $X_{(a,b)}=\setcon{x_k\in V\Gamma}{(a,b)\in G_{x_k}}$ is a $G_m$-decomposition of $\Gamma$.
     It is clear that the full subgraph with vertex set $G_{x_k}$ is connected, $x_k\in X_{(k,k)}$ for each $k$ and $x_k,x_l\in X_{(k,k)}$ whenever $x_kx_l\in E\Gamma$, so $\{X_{(a,b)}\}_{(a,b)\in VG_m}$ is a $G_m$ decomposition of $\Gamma$.

     If $x_k\in X_{(a,b)}$ then either $a=b$ and $X_{(a,a)}=\{x_a\}\cup\setcon{x_k}{x_ax_k\in E\Gamma}$ which has cardinality $1+\deg(x_a)$; or $a\neq b$ and $X_{(a,b)}=\setcon{x_k}{x_ax_k\in E\Gamma}\cup\setcon{x_k}{x_bx_k\in E\Gamma}$ which has cardinality at most $\deg(a)+\deg(b)$. Thus, 
     \[\max_{(a,b)}|X_{(a,b)}| \leq \Delta_\Gamma+\max\{\Delta_\Gamma,1\} \qedhere
     \]
 \end{proof}

\subsection{Coarse wiring}\label{sec:wiring}

In the final part of this section we prove Propositions \ref{prop:widinftywiring} and \ref{prop:wid1wiring}. Let us start with the formal definition of a coarse wiring from \cite{BarrettHume}. The definition given here is slightly different to allow situations where the image graph does not have bounded degree.

\begin{definition}\label{defn:kwiring} Let $\Gamma,\Gamma'$ be graphs. A \textbf{wiring} of $\Gamma$ into $\Gamma'$ is a continuous map $f:\Gamma\to\Gamma'$ which maps vertices to vertices and each edge $xy$ to a walk $W_{xy}$ which starts at $f(x)$ and ends at $f(y)$.

A wiring $f$ is a \textbf{coarse $k$-wiring} if the preimage of each vertex of $V\Gamma'$ contains at most $k$ vertices in $V\Gamma$, and each vertex $v\in V\Gamma'$ is contained in at most $k$ of the walks in $\mathcal W=\setcon{W_{xy}}{xy\in E\Gamma}$.

We consider the \textbf{image} of a wiring $\im(f)$ to be the graph
\[  \phi(V\Gamma)\cup\bigcup_{xy\in E\Gamma} W_{xy} \leq \Gamma'. 
\]
The \textbf{volume} of a wiring $\vol(f)$ is the number of vertices in its image. 

Let $\Gamma$ be a finite graph and let $Y$ be a graph. We denote by $\wir^k(\Gamma\to Y)$ the minimal volume of a coarse $k$-wiring of $\Gamma$ into $Y$. If no such coarse $k$-wiring exists, we say $\wir^k(\Gamma\to Y)=+\infty$.
\end{definition}

\begin{remark}\label{rem:wiringdefns} Any $k$-coarse wiring in the sense of \cite{BarrettHume} is a $k\Delta_{\Gamma'}$-coarse wiring as defined here. When $\Gamma'$ has bounded degree, any $k$-coarse wiring as defined here is a $2k$-coarse wiring in the sense of \cite{BarrettHume}. We write the definition in this way here as we will want to allow $\Gamma'$ to have unbounded degree. 
\end{remark}

We associate two cost functions to the difficulty of coarse wiring one graph into another.

\begin{definition} Let $X$ and $Y$ be graphs. 
\medskip 
We define $\para_{X\to Y}(r)$ to be the minimal $k$ such that every $\Gamma\leq X$ with $|V\Gamma|\leq r$ admits a $k$-coarse wiring into $Y$. For each $k$, we define $\wir^k_{X\to Y}(r)=\max\setcon{\wir^k(\Gamma\to Y)}{\Gamma\leq X,\ |V\Gamma|\leq r}$.
\end{definition}
Note that when $X$ has bounded degree, the upper bound $\para_{X\to Y}(r)\leq \Delta_X r$ is obtained by mapping all of $\Gamma$ to a single vertex.

\begin{remark}
    A coarse wiring can be seen as a generalisation of a regular map as, when restricted to the vertex set, it need not be Lipschitz.
    A regular map between bounded degree graphs can always be extended to a coarse wiring (using exactly the construction from \S\ref{sec:buildGamma'}), meaning that whenever there is a $\kappa$-regular map $X\to Y$, and $X$ has infinitely many vertices, $\wir^k_{X\to Y}(r) \simeq r$ for all $k$ larger than some constant $k_0=k_0(\kappa,\Delta_X,\Delta_Y)$. The converse is not true, $\wir^1_{T_3\to \Z^2}(r) \simeq r$ but there is no regular map $T_3\to\Z^2$ as the former has exponential growth and the latter has quadratic growth. Typically, the restriction of a coarse wiring to the vertex set is not Lipschitz (even on average).
\end{remark}

We now prove Proposition \ref{prop:widinftywiring}

\begin{proposition*}
    Let $X$ and $G$ be graphs. Then for every $r\in\N$
    \[
        \wid^{G,\infty}_X(r) \leq 2 \para_{X\to G}(r)
    \]
    If, in addition, $X$ has bounded degree, then for every $r\in\N$
    \[
        \para_{X\to G}(r) \leq \max\{1,\Delta_X\}\wid^{G,\infty}_X(r)
    \]
\end{proposition*}
\begin{proof} Let $\Gamma\leq X$ with $|V\Gamma|\leq r$ and let $f:\Gamma\to G$ be a $k$-coarse wiring with $k\leq \para_{X\to G}(r)$.

For each $x\in V\Gamma$, set $\overline{f}(x)$ to be the set of all vertices in $\bigcup_{xy\in E\Gamma} W_{x,y}$. We claim that $X_g=\setcon{x\in V\Gamma}{g\in \overline{f}(x)}$ defines a $G$-decomposition of $\Gamma$.

For each edge $xy\in E\Gamma$, $\{x,y\}\subset X_g$ for any $g\in VW_{xy}\neq\emptyset$. Next, by construction, for each $x\in V\Gamma$,
\[
 \setcon{g\in G}{x\in X_g}=\overline{f}(x)
\]
which induces a connected subgraph ($\bigcup_{xy\in E\Gamma} W_{x,y}$) of $G$.

Finally, for each $g$, there are at most $k$ edges $xy\in E\Gamma$ such that $g\in W_{xy}$ and therefore at most $2k$ vertices $x\in V\Gamma$, such that $g\in\overline{f}(x)$. Thus
\[
    \wid^{G,\infty}_X(r)\leq 2\para_{X\to G}(r)
\]
Conversely, let $\{X_g\}_{g\in VG}$ be a $G$-decomposition of $\Gamma$ with $\max_{g\in VG} |X_g| \leq k$. If $\Delta_X=0$ then 
\[\wid^{G,\infty}_X(r) = \para_{X\to G}(r) = 
\left\{
\begin{array}{cc}
    \left\lceil\frac{r}{|G|}\right\rceil & \textup{if }|G|<+\infty, \\
    1 & \textup{if }|G|=+\infty
\end{array}
\right.
\]
Now assume $\Delta_X\geq 1$. Let us define a wiring $f:\Gamma\to G$ as follows:
\begin{itemize}
    \item for each $x\in V\Gamma$ choose $f(x)\in G_x$,
    \item for each $xy\in E\Gamma$, $G_x, G_y$ are connected and intersect, so set $W_{xy}$ to be any path composed of a path from $f(x)$ to some $z\in G_x\cap G_y$ contained in $G_x$ followed by a path from $z$ to $f(y)$ contained in $G_y$.
\end{itemize}
Let $g\in VG$. If $g\in W_{xy}$ for some $xy\in E\Gamma$, then $g\in G_x\cup G_y$, so one of $x,y$ is contained in $X_g$. Therefore
\[
 \max_{g\in VG} |\setcon{xy\in E\Gamma}{g\in W_{xy}}| \leq \Delta_\Gamma\max_{g\in VG}|X_g|
\]
As this holds for every $\Gamma$ and every $G$-decomposition, we have
\[
    \wid^{G,\infty}_X(r) \leq \max\{1,\Delta_X\}\para_{X\to G}(r)
\]
\end{proof}

Finally, we prove Proposition \ref{prop:wid1wiring}. 

\begin{proposition*}
    For every $k$ and every finite graph $\Gamma$
    \begin{itemize}
        \item if there is a coarse $k$-wiring $f:\Gamma\to G$ with volume at most $V$, then ${}^{2k}\wid^{G,1}(\Gamma)\leq 2kV$,
        \item if ${}^k\wid^{G,1}(\Gamma)\leq V$ then there is a coarse $\Delta_\Gamma k$-wiring $\Gamma\to G$ with volume at most $(1+\Delta_\Gamma)V$.
    \end{itemize}
\end{proposition*}
\begin{proof} 
We recall that ${}^k\wid^{G,1}(\Gamma)\leq V$ if and only if there is a $G$-decomposition $\{X_g\}_{g\in VG}$ of $\Gamma$ with the properties
\begin{itemize}
    \item $\max_{g\in VG} |X_g| \leq k$,
    \item $\sum_{g\in VG} |X_g| \leq V$,
\end{itemize}

Choose a coarse $k$-wiring $f:\Gamma\to G$ with volume at most $V$. We define a $G$-decomposition $\{X_g\}_{g\in VG}$ exactly as in the proof of Proposition \ref{prop:widinftywiring}, and deduce using that proposition that
\[
 \max_{g\in VG} |X_g| \leq 2k
\]
Consequently,
\begin{align*}
    \sum_{g\in VG} |X_g| & \leq |\setcon{g\in VG}{X_g\neq\emptyset}|\max_{g\in VG} |X_g| \\
        & \leq \vol(f) \max_{g\in VG} |X_g| \\
        & \leq 2 k V 
\end{align*}

    We now build a $\Delta_Xk$-coarse wiring $f:\Gamma\to G$ exactly as the proof of Proposition \ref{prop:widinftywiring}. Note that for each $x,y$, $|W_{xy}|\leq |G_x|+|G_y|$. Also,
    \[
    \sum_{g\in VG} |X_g| = \sum_{x\in V\Gamma} |G_x|
    \]
    Combining these observations, we see that
    \begin{align*}
        \vol(f) & \leq |V\Gamma| + \sum_{xy\in E\Gamma} |W_{xy}| \\
                & \leq \sum_{g\in VG} |X_g| + \sum_{xy\in E\Gamma} (|G_x|+|G_y|) \\
                & \leq (1+\Delta_X) \sum_{g\in VG} |X_g| \\
                & \leq (1+\Delta_X)V
    \end{align*} \qedhere
\end{proof}

\section{Questions}
We start by repeating a question raised earlier in the paper.

\begin{question}\label{qu:septw}
    Is there a bounded degree graph $X$ such that $\sep_X\not\simeq\tw_X$?
\end{question}

Also, we believe that the following is open.

\begin{question}
    Let $T_3,T$ be the regular trees of degree $3$ and $+\infty$ respectively. Does
    \[
     \wid^{T_3,p}_X(r)\not\simeq \wid^{T,p}_X(r)
    \]
    hold for every bounded degree graph $X$?
\end{question}

Other natural questions in this direction include finding pairs of graphs $X,Y$ and $1\leq p, q \leq +\infty$ such that
\begin{itemize}
    \item $\cw^p_X\simeq \cw^p_Y$ but $\cw^q_X\not\simeq \cw^q_Y$,
    \item $\bw^p_X\simeq \bw^p_Y$ but $\bw^q_X\not\simeq \bw^q_Y$,
    \item $\wid^{G,p}_X\simeq \wid^{G,p}_Y$ but $\wid^{G,q}_X\not\simeq \wid^{G,q}_Y$ (with $p>1$).
\end{itemize}
Given the near-optimal bounds on $\cw^p$ provided by Theorem \ref{sepboundscw}, the first of these would be particularly interesting (if such an example exists).
\medskip

Perhaps the easiest case where the equivalence $\sep_X\simeq\cw_X$ is still open is for regular tesselations of the hyperbolic plane. A more general question is the following.

We recall that a graph $X$ is called \textbf{non-amenable} if there is some $\varepsilon>0$ such that, for every finite subset $A\subset VX$
\[
    |\setcon{v\in VX}{d_X(A,v)=1}|\geq \varepsilon|A|
\]
\begin{question}\label{qu:hypplane}
    Does every bounded degree non-amenable planar graph $X$ satisfy $\cw_X(r)\lesssim \log(r)$?
\end{question}
This upper bound holds for metric balls in non-amenable planar graphs \cite{KMP-planar-Glauber}.\footnote{In this paper the graphs are called hyperbolic rather than non-amenable, but hyperbolic is used elsewhere in this paper to mean Gromov hyperbolic.}

As mentioned previously, we know much less about bandwidth than cutwidth, so the following question remains very interesting.
\begin{question}
    For which (Cayley) graphs does 
    \[\bw_X(r)\simeq r/\kappa_X(r)\]
    hold?
\end{question}
Regular trees of finite degree $\geq 3$ and Cayley graphs of non-elementary hyperbolic groups are examples. A natural collection of graphs to consider would be vertex transitive graphs of polynomial growth (as the corresponding result holds for both separation and cutwidth profiles). Another natural result which may hold is

\begin{question}
    Does $\bw_X(r)\lesssim r/\kappa_X(r)$
    hold for every bounded degree graph with finite Assouad-Nagata dimension?
\end{question}
The corresponding result for the separation profile (and the definition of Assouad-Nagata dimension) appears in \cite[Theorem 1.5]{HumSepExp}.
\medskip

For other values of $p$ we offer two (motivated) guesses for trees and integer grids:

\begin{question}\label{qu:bwptree}
    Is it true that for every $p\in[1,\infty)$
    \[
     \bw^p_{T_3}(r)\simeq \left\{ \begin{array}{ll}
         r\log(1+r) & p=1 \\
         r & p>1 
     \end{array}\right.
    \]
    where $T_3$ is the infinite $3$-regular tree?
\end{question}
The rationale behind this suggestion is that rooted binary trees should be (among the) hardest subgraphs to order effectively, and that for the binary tree of depth $r$ (with $2^{r+1}-1$ vertices) there is an ordering where, for each $1\leq i \leq r$, there are exactly $2^i$ edges whose end vertices are $2^{r-1}$ apart in the ordering. Applying the same rationale to $p$-cutwidth yields the guess

\begin{question}
    Let $T_3$ be the infinite $3$-regular tree. Does
    \[
     \cw^p_{T_3}(r)\simeq r^{\frac1p}\log(1+r)
    \]
    hold for every $p\in[1,\infty)$?
\end{question}
In this case the upper bound already holds thanks to Proposition \ref{prop:compare}.

\begin{question}\label{qu:bwpgrid}
    Let $\Z^k$ be the $k$-dimensional integer grid. Does
    \[
     \bw^p_{\Z^k}(r)\simeq r^{\frac1p+(1-\frac1k)}
    \]
    hold for every $p\in[1,\infty)$?
\end{question}
Again, the rationale is that cubes are among the hardest subgraphs to order, and that the best ordering is lexicographic, meaning that the end vertices of every edge are at most $r^{1-\frac1k}$ apart (and a positive proportion of them are exactly that far apart).
\medskip

Another very natural question is
\begin{question} Does $\cw^p_X\lesssim \bw^p_X$ hold
for every bounded degree graph $X$, and every $p\in(1,\infty)$,
\end{question}

Finally, we make one guess to try to generate interest in a broader collection of the graph decomposition invariants. Typically, the subgraphs of an integer grid $\Z^d$ which have the highest cost resemble $d$-dimensional cubes. We can decompose these cubes over $\Z^k$ (with $k\leq d$) in the following way:
\[
 X_{(x_1,\ldots,x_k)}= \setcon{(y_1,\ldots,y_d)}{\sum_{i=1}^k |x_i-y_i|\leq 1}
\]
Starting with a $d$-cube $C^d_r$ with $r^d$ vertices, we have $|X_g|\simeq r^{d-k}$ for $\simeq r^k$ vertices, and $|X_g|=0$ elsewhere. Hence, for every $p\in [1,\infty)$
\[
    \wid^{\Z^k,p}(C^d_r) \lesssim_{k,d} r^k r^{\frac{d-k}{p}}
\]
This motivates the following question
\begin{question}
    Does
    \[
    \wid^{\Z^k,p}_{\Z^d}(r) \simeq_{k,d} r^{\frac{d+k(p-1)}{dp}}
    \]
    hold for every $p\in[1,\infty)$ and every $1\leq k < d$?
\end{question}

\def\cprime{$'$}

\end{document}